\newtheorem{theorem}{Theorem}[section]
\newtheorem{lemma}[theorem]{Lemma}     
\newtheorem{corollary}[theorem]{Corollary}
\newtheorem{proposition}[theorem]{Proposition}
\newtheorem*{theorem*}{Theorem}
\newtheorem*{corollary*}{Corollary}
\theoremstyle{definition}
\newtheorem*{remark*}{Closing Remark}
\newtheorem{remark}[theorem]{Remark}
\newcommand{\espai}{\phantom{++}}
\newcommand{\mbc}{\mbox{$\mathbb{C}$}}
\newcommand{\mfm}{\mbox{$\mathfrak{m}$}}
\newcommand{\mfn}{\mbox{$\mathfrak{n}$}}
\newcommand{\mfp}{\mbox{$\mathfrak{p}$}}
\newcommand{\mfq}{\mbox{$\mathfrak{q}$}}
\newcommand{\rad}{\mbox{${\rm rad}$}}
\newcommand{\height}{\mbox{${\rm ht}$}}
\newcommand{\grade}{\mbox{${\rm grade}$}}
\newcommand{\depth}{\mbox{${\rm depth}$}}
\newcommand{\length}{\mbox{${\rm length}$}}
\newcommand{\rank}{\mbox{${\rm rank}$}}
\newcommand{\pd}{\mbox{${\rm proj}\,{\rm dim}\,$}}
\newcommand{\Ext}{\mbox{${\rm Ext}$}}
\newcommand{\Ann}{\mbox{${\rm Ann}$}}
\newcommand{\Spec}{\mbox{${\rm Spec}$}}
\newcommand{\Gor}{\mbox{${\rm Gor}$}}
\newcommand{\Syz}{\mbox{${\rm Syz}$}}
\title[Regular local rings of dimension 4] {Regular local rings of
dimension four and Gorenstein syzygetic prime ideals}
\author{Francesc Planas-Vilanova}
\date{\today}
\subjclass[2020]{13A30,13D02,13D03,13H05,13H10}
\keywords{Regular local rings, Gorenstein rings, syzygetic ideals, 
homology of Andr\'e-Quillen.\newline
This work is partially supported by the
2017SGR932 and PID2019-103849GB-I00/AEI / 10.13039/501100011033}
\begin{document}

\maketitle

{\centering\footnotesize Dedicated to the memory of Wolmer
  V. Vasconcelos.\par}

\begin{abstract}
Let $R$ be a Noetherian local ring. We prove that $R$ is regular of
dimension at most 4 if, and only if, every prime ideal, defining a
Gorenstein quotient ring, is syzygetic. We deduce a characterization
of these rings in terms of the Andr\'e-Quillen homology.
\end{abstract}

\section{Introduction}

Let $R$ be a Noetherian commutative ring and $I$ an ideal of $R$.  Let
$\alpha:{\bf S}(I)\to {\bf R}(I)$ be the graded surjective morphism
from the symmetric algebra of $I$ to the Rees algebra of $I$. The
ideal $I$ is said to be syzygetic if the second component
$\alpha_2:{\bf S}_2(I)\to I^2$ is an isomorphism; it is said to be of
linear type if $\alpha$ is an isomorphism. Ideals generated by a
regular sequence are of linear type, hence syzygetic.  Noetherian
rings of global dimension at most 1, 2 and 3, were recently
characterized in \cite{planasJPAA} in terms of the syzygetic and
linear type conditions. Recall that the global dimension of $R$ is
defined as the supremum of the projective dimensions of all
$R$-modules. For a Noetherian ring $R$, having global dimension at
most $N$ is equivalent to $R_{\mathfrak{m}}$ being regular local with
$\dim(R_{\mathfrak{m}})\leq N$, for every maximal ideal $\mfm$ of $R$.
The purpose of this note is to extend \cite{planasJPAA} to dimension
4, but restricted to the local case.

\begin{theorem*}\label{main}
Let $(R,\mfm)$ be a Noetherian local ring. The following conditions
are equivalent:
\begin{itemize}
\item[$(i)$] $R$ is regular and $\dim(R)\leq 4$;
\item[$(ii)$] Every prime ideal $\mfp$ of $R$, with $R/\mfp$
  Gorenstein, is syzygetic;
\end{itemize}
\end{theorem*}

The main ingredient in the implication $(i)\Rightarrow (ii)$ is the
following result of Herzog, Simis and Vasconcelos: if $R$ is regular
local, with $1/2\in R$, then a Gorenstein ideal of height 3 is
syzygetic (see \cite[Proposition~2.8]{hsv}). In order to avoid the
condition $1/2\in R$, we give a different approach borrowing ideas
from Ulrich in \cite[(2.2)]{ulrich}. This is done in
Lemma~\ref{gorenstein}

As for the reverse implication, suppose that $(ii)$ holds. Using
another nice result of Herzog, Simis and Vasconcelos one deduces that
$R$ is regular. Indeed, a Noetherian local ring whose maximal ideal is
syzygetic is regular (see the proof of Corollary~3.8 and,
particularly, Proposition~2.5, in \cite{hsv}).

Thus, one must show that any regular local ring $R$ of dimension at
least 5 admits a prime ideal $\mfp$, whose factor ring $R/\mfp$ is
Gorenstein, and such that $\mfp$ is not syzygetic. We first address
the five dimensional case. Our candidate, call it $I$, is inspired by
the following affine example: consider the most simple five
dimensional Gorenstein curve which is not a complete intersection
(\cite[Theorem~4.4]{hunekeCM}). Then take a minimal system of
generators of its equations, namely, the kernel of the ring
homomorphism $\mbc[[X,Y,Z,T,U]]\to\mbc[[V]]$, which sends $X,Y,Z,T,U$
to $V^6,V^7,V^8,V^9,V^{10}$. On substituting the variables by the
regular parameters $x,y,z,t,u$ of the regular local ring $R$, we
obtain a minimal system of generators of the candidate ideal $I$.

Lemma~\ref{perfect} proves that $I$ is a perfect ideal of height 4,
such that $R/I$ is Gorenstein, and such that $I$ is not syzygetic. To
see that $I$ is perfect we use the well-known acyclicity criterion of
Buchsbaum and Eisenbud. We would like to stress here that our proof
holds in any regular local ring, non necessarily an algebra
essentially of finite type over a field. In that sense, Singular
(\cite{dgps}) is of great help in finding and checking products of
matrices, as well as guessing which minors will conform the regular
sequences of the required length. However, our specific full proof of
the perfectness of $I$ can not be deduced, at least just by transport,
from Singular.

The key point of the paper is to show that $I$ is in fact a prime
ideal. First, we reduce to the complete case. Then we take any
associated prime $\mfp$ to $I$, necessarily different from the maximal
ideal. Since $R$ is complete, the integral closure of $R/\mfp$ is a
DVR. Using the valuation corresponding to $R/\mfp$, we are able to
deduce the equality $I=\mfp$. This is done in Proposition~\ref{dim5}.

Once the result is proved in dimension 5, we extend the example in any
arbitrary higher dimension in Corollary~\ref{dimn}.

The paper finishes with a characterization of Noetherian local rings
which are regular and of dimension at most 4 in terms of the
Andr\'e-Quillen homology (see Corollary~\ref{corollaryAQ}).

For any unexplained notation we refer to \cite{matsumura}, \cite{bh},
\cite{hsv}, \cite{andre} and \cite{quillen}.

\section{Proof of the main result}

\begin{lemma}\label{gorenstein}
Let $(R,\mfm)$ be a Gorenstein local ring. Let $I$ be an ideal of
height 3, generically a complete intersection, having finite
projective dimension, and such that $R/I$ is Gorenstein. Then $I$ is
syzygetic.
\end{lemma}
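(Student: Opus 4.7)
The strategy is to reformulate syzygety of $I$ in terms of Koszul homology and then combine this with the local structure of $I$ at its minimal primes.

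First, I would reinterpret $\delta(I) := \ker(\alpha_2)$ in Koszul-theoretic terms. If $g_1,\ldots,g_n$ is a minimal generating set of $I$ and $H_1 = H_1(g_1,\ldots,g_n;R)$ denotes the first Koszul homology module, then each $g_i$ annihilates $H_1$; tensoring the presentation $0\to Z_1\to R^n\to I\to 0$ with $R/I$ yields the standard Simis-Vasconcelos exact sequence
\[
0 \to \delta(I) \to H_1 \to (R/I)^n \to I/I^2 \to 0,
\]
so that syzygety of $I$ is equivalent to the injectivity of the canonical map $H_1 \to (R/I)^n$. Since $R$ is Gorenstein, $I$ has finite projective dimension, and $R/I$ is Gorenstein, the Buchsbaum--Eisenbud structure theorem applies and produces the self-dual minimal free resolution
\[
0 \to R \to R^n \to R^n \to R \to R/I \to 0
\]
with $n$ odd and skew-symmetric middle map. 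If $n=3$ then $I$ is a complete intersection, hence of linear type and in particular syzygetic; so I focus on $n\geq 5$.

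The main structural input—this is the step borrowed from \cite[(2.2)]{ulrich}—is that $H_1$ is a maximal Cohen-Macaulay $R/I$-module, equivalently, that $I$ is strongly Cohen-Macaulay. In our setting this can be obtained by combining the fact that perfect Gorenstein ideals of grade $3$ are licci (Watanabe) with Huneke's theorem that licci ideals are strongly Cohen-Macaulay; alternatively, it can be checked directly from the self-dual resolution above. The crucial consequence is that every associated prime of $H_1$, and therefore of its submodule $\delta(I)$, lies among the minimal primes of $I$.

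It then remains to prove that $\delta(I)$ vanishes at every minimal prime $\mfp$ of $I$. This is where the generic complete intersection hypothesis enters: the ideal $I_\mfp$ is generated by a regular sequence in $R_\mfp$, hence of linear type and in particular syzygetic. Since $\delta(I) \cong H_2(R,R/I,R/I)$ and André-Quillen homology localizes,
\[
\delta(I)_\mfp \;\cong\; H_2(R_\mfp, R_\mfp/I_\mfp, R_\mfp/I_\mfp) \;\cong\; \delta(I_\mfp) \;=\; 0.
\]
Combined with the previous paragraph this forces $\delta(I)=0$ and hence $I$ syzygetic. The main technical obstacle is the strong Cohen-Macaulay step; once it is in place the remainder is a routine associated-primes argument, and it is precisely this reformulation that lets the proof bypass the characteristic assumption $1/2\in R$ of \cite[Proposition~2.8]{hsv}.
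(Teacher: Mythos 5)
Your argument is essentially the paper's proof: both rest on the four-term exact sequence $0\to\delta(I)\cong H_2(R,R/I,R/I)\to H_1\to (R/I)^n\to I/I^2\to 0$, the fact that grade-three Gorenstein perfect ideals are licci and hence have Cohen--Macaulay first Koszul homology (Watanabe plus Huneke), which confines the associated primes of $H_1$ (hence of $\delta(I)$) to the minimal primes of $I$, and the generic complete intersection hypothesis to kill $\delta(I)$ at those primes. The detour through the Buchsbaum--Eisenbud structure theorem and the case split $n=3$ versus $n\geq 5$ is harmless but unnecessary; otherwise the proposal matches the paper's route.
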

\begin{proof}
Let $S=R/I$ and let $H_1(I)$ denote the first Koszul homology group
associated to a minimal system of generators of $I$. By
\cite[Proposition~1.2.13]{bh}, for every associated prime $\mfq$ to
$H_1(I)$,
\begin{eqnarray*}
  \depth(H_1(I))\leq
  \dim(S/\mfq)\leq\dim(S/\Ann_S(H_1(I)))=\dim(H_1(I))\leq \dim S.
\end{eqnarray*}
By hypothesis, $I$ is in the linkage class of a complete intersection,
so $H_1(I)$ is a Cohen-Macaulay module of maximum dimension (see,
e.g., \cite[Remark~1.3, Theorem~1.14 and Example~2.2]{hunekeAJM}).
Therefore $\dim(S/\mfq)=\dim(S)$ and every associated prime $\mfq$ of
$H_1(I)$ is a minimal prime of $S$. Hence the set of zero divisors of
$H_1(I)$ is included in the set of zero divisors of $S$, which implies
$H_1(I)$ is torsion-free. On the other hand, by \cite[15.12]{andre} or
\cite[Discussion before Proposition~2.5]{hsv}, one has the exact
sequence:
\begin{eqnarray*}
0\to H_2(R,S,S)\to H_1(I)\to F\otimes S\to I/I^2\to 0,
\end{eqnarray*}
where $F\to I\to 0$ is a minimal free presentation of $I$ and
$H_2(R,S,M)$ stands for the second Andr\'e-Quillen homology group of
the $R$-algebra $S$ with coefficients in the $S$-module $M$ (see
\cite{andre}, \cite{quillen}, \cite{mr}, \cite{iyengar}).  Since $I$
is generically a complete intersection at its associated primes, for
every associated prime $\mfp$ to $I$,
\begin{eqnarray*}
  H_2(R,S,S)_{\mathfrak{p}}=H_2(R_{\mathfrak{p}},R_{\mathfrak{p}}/I_{\mathfrak{p}},
  R_{\mathfrak{p}}/I_{\mathfrak{p}})=0.
\end{eqnarray*}
Hence $H_2(R,S,S)$ is a $S$-torsion module, thus included in the
torsion of $H_1(I)$, which is zero. Therefore $H_2(R,S,S)=0$. Since
$H_2(R,R/I,R/I)\cong\ker(\alpha_2:{\bf S}_2(I)\to I^2)$, it follows
that $I$ is syzygetic (see, e.g., \cite[Corollaire~15.10]{andre} or
\cite[Corollaire~3.2]{planasMan}).
\end{proof}

\begin{proof}[Proof of Theorem $(i)\Rightarrow (ii)$.]
Suppose that $(R,\mfm)$ is regular local with $\dim(R)\leq 4$.  Then
$\mfm$ is generated by a regular sequence, so $\mfm$ is syzygetic
(see, e.g., \cite[Corollary~3.8]{hsv}); since $R$ is a UFD, then every
height 1 prime ideal is principal (generated by a nonzero divisor),
and so again syzygetic; furthermore, every height 2 Gorenstein prime
ideal in a regular local ring is generated by a regular sequence, thus
syzygetic (see, e.g. \cite[\S4]{hunekeCM}). Let $\mfp$ be a prime
ideal of height 3, such that $R/\mfp$ is Gorenstein. Since $R$ is
regular, $\mfp$ has finite projective dimension and $\mfp$ is
generically a complete intersection. By Lemma~\ref{gorenstein}, $\mfp$
is syzygetic.
\end{proof}

\begin{lemma}\label{perfect}
Let $(R,\mfm,k)$ be a regular local ring of dimension $5$. Let
$x,y,z,t,u$ be a regular system of parameters. Let $I$ be the ideal of
$R$ generated by
\begin{eqnarray*}
  &&f_1=y^2-xz\mbox{ , }f_2=yz-xt\mbox{ , }f_3=z^2-yt\mbox{,}\\
  && f_4=yt-xu\mbox{ , }f_5=zt-yu\mbox{ , }f_6=t^2-zu\mbox{,}\\
  && f_7=zu-x^3\mbox{ , }f_8=tu-x^2y\mbox{ , }f_9=u^2-x^2z.
\end{eqnarray*}
Then $I$ is a perfect ideal of height 4, such that $R/I$ is
Gorenstein, and such that $I$ is not syzygetic.
\end{lemma}
\begin{proof}
Let $\varphi_1=(f_1,\ldots,f_9)$ be the $1\times 9$ matrix given by
the nine aforementioned binomials:
\begin{eqnarray*}
\varphi_1=\left(\begin{array}{ccccccccc}
y^2-xz,yz-xt,z^2-yt,yt-xu,zt-yu,t^2-zu,zu-x^3,tu-x^2y,u^2-x^2z
\end{array}\right).
\end{eqnarray*}
Let $\varphi_2$, $\varphi_3$ and $\varphi_4$ be the $9\times 16$,
$16\times 9$ and $9\times 1$, matrices defined as:
\begin{eqnarray*}
\varphi_2=\left(\begin{array}{rrrrrrrrrrrrrrrr}
0& 0& -u&t& z&0&x^2&0& u& t& 0& 0& 0& 0& 0& 0\\
0& u& t& 0& -y&x^2&0& u& 0& -z&0&x^2&0& u& 0& 0\\
u& 0& 0& 0& x& 0& u& 0& 0& y&x^2&0& u& -t&0& 0\\
u& 0& 0& -y&0&0& 0& -t&-z&0&x^2&0& 0& -t&0&x^2\\
0& 0& -y&x& 0&  u& 0& 0& y& 0& 0& 0& -t&z&x^2&0\\
0& 0& x& 0& 0& 0& 0& y& 0& 0& 0& 0& z& 0& 0& -u\\
-z&-y&0& 0& 0& -t&-z&0& 0& 0& -u&-t&0& 0& 0& -u\\
0& x& 0& 0& 0&0& y& 0& 0& 0& 0& z& 0& 0& -u&t\\
x& 0& 0& 0& 0&y& 0& 0& 0& 0& z& 0& 0& 0& t& 0  
\end{array}\right),
\end{eqnarray*}
\begin{eqnarray*}
\varphi_3=\left(\begin{array}{rrrrrrrrr}
z& -t& 0& 0& y& 0& 0&  0& 0\\ 
-t&u&  0& 0& -z&0& 0&  0& 0\\  
u& 0&  y& -z&0& 0& 0&  0& xy\\ 
0& -x^2&-z&t& u& 0& 0&  0& -xz\\
x^2&0&  t& -u&0& 0& 0&  0& xt\\
0& 0&  0& 0& -x&z& -t& 0& 0\\  
-z&t&  0& 0& 0& -t&u&  0& 0\\  
0& -u& -x&0& 0& u& 0&  -z&-x^2\\
u& 0&  y& 0& 0& 0& -x^2&t& xy\\
0& 0&  0& -t&-u&x^2&0&  -u&0\\ 
-x&0&  0& 0& 0& -y&0&  0& -t\\ 
y& 0&  0& 0& x& 0& t&  0& u\\  
0& 0&  0& x& 0& 0& -u& y& 0\\  
0& 0&  x& -y&0& -u&0&  0& 0\\  
0& x&  0& 0& 0& 0& y&  0& z\\  
x& -y& 0& 0& 0& y& -z& 0& 0  
\end{array}\right),
\end{eqnarray*}
\begin{eqnarray*}
\varphi_4^\top=\left(
zt-yu,z^2-yt,u^2-xy^2,tu-x^2y,-t^2+zu,-yt+xu,-yz+xt,-zu+x^3,y^2-xz  
\right).
\end{eqnarray*}
Since $\varphi_1\cdot\varphi_2=0$, $\varphi_2\cdot\varphi_3=0$ and
$\varphi_3\cdot\varphi_4=0$, then
\begin{eqnarray*}
0\leftarrow R/I\leftarrow R=F_0\xleftarrow{\varphi_1}R^9=F_1
\xleftarrow{\varphi_2}R^{16}=F_2\xleftarrow{\varphi_3}R^9=F_3
\xleftarrow{\varphi_4}R=F_4\leftarrow 0
\end{eqnarray*}
is a complex of $R$-modules.  To see that this complex is exact, we
use the acyclicity criterion of Buchsbaum and Eisenbud (see, e.g.,
\cite[Theorem~1.4.12]{bh}). Set
$r_i=\sum_{j=i}^4(-1)^{j-i}\rank\,F_j$, so that $r_1=1$, $r_2=8$,
$r_3=8$ and $r_4=1$. Thus we have to prove that
$\grade(I_1(\varphi_1))\geq 1$, $\grade(I_8(\varphi_2))\geq 2$,
$\grade(I_8(\varphi_3))\geq 3$ and $\grade(I_1(\varphi_4))\geq 4$.

The ideal $(f_1,f_3+f_4,f_6+f_7,f_9,x)$ is equal to
$(y^2,z^2,t^2,u^2,x)$, so has grade $5$.  By
\cite[Corollary~1.6.19]{bh}, $f_1,f_3+f_4,f_6+f_7,f_9$ is an
$R$-regular sequence in $I=I_1(\varphi_1)$ of length four. In
particular, $\grade(I)\geq 4\geq 1$. Similarly, one has the equality:
\begin{eqnarray*}
  ((\varphi_4)_{9,1},(\varphi_4)_{2,1}-(\varphi_4)_{6,1},
  (\varphi_4)_{5,1}+(\varphi_4)_{8,1},(\varphi_4)_{3,1},x)=
  (y^2,z^2,t^2,u^2,x).
\end{eqnarray*}
Therefore $(\varphi_4)_{9,1},(\varphi_4)_{2,1}-(\varphi_4)_{6,1},
(\varphi_4)_{5,1}+(\varphi_4)_{8,1},(\varphi_4)_{3,1}$ is an
$R$-regular sequence in $I_1(\varphi_4)$ of length four and
$\grade(I_1(\varphi_4))\geq 4$.

In order to prove $\grade(I_8(\varphi_2))\geq 2$, we look for minors
of $\varphi_2$ with monic pure terms in one of the parameters. For
instance, up to sign, the minor
$g_1:=y^8-3xy^6z+3x^2y^4z^2-x^3y^2z^3\in I_8(\varphi_2)$, with monic
pure term in $y$, is obtained from the $8\times 8$ sub-matrix given by
the rows $2,3,4,5,6,7,8,9$ and the columns
$2,4,5,6,7,8,9,10$. Similarly, we get
$g_2:=t^8-3zt^6u+3z^2t^4u^2-z^3t^2u^3\in I_8(\varphi_2)$ from the
$8\times 8$ sub-matrix given by the rows $1,2,3,4,5,7,8,9$ and the
columns $3,8,10,12,13,14,15,16$. Since $(g_1,g_2,x,u)=(x,y^8,t^8,u)$,
then $\grade(g_1,g_2,x,u)=4$, $g_1,g_2$ is an $R$-regular sequence in
$I_8(\varphi_2)$, and $\grade(I_8(\varphi_2))\geq 2$. Observe that
this argument does not depend on the characteristic of the ring $R$.

As before, let us seek for minors of $\varphi_3$ with monic pure terms
in one of the parameters. For instance,
$h_1:=y^8-3xy^6z+3x^2y^4z^2-x^3y^2z^3\in I_8(\varphi_3)$ is obtained
from the $8\times 8$ sub-matrix given by the rows
$1,3,11,12,13,14,15,16$ and the columns $1,2,3,4,5,6,7,8$;
$h_2:=z^8-3yz^6t+3y^2z^4t^2-y^3z^2t^3\in I_8(\varphi_3)$ is obtained
from the rows $1,2,3,4,6,8,15,16$ and the columns $1,3,4,5,6,7,8,9$.
Finally, $h_3:=t^8-3zt^6u+3z^2t^4u^2-z^3t^2u^3 \in I_8(\varphi_3)$ is
obtained from the rows $1,2,4,5,6,7,9,11$ and the columns
$1,2,3,4,6,7,8,9$. Note that $\rad(h_1,h_2,h_3,x,u)=\mfm$. Hence
$\grade(h_1,h_2,h_3,x,u)=5$. It follows that $h_1,h_2,h_3$ is an
$R$-regular sequence in $I_8(\varphi_3)$ and that
$\grade(I_8(\varphi_3))\geq 3$.

We conclude that the complex above is a free resolution of $R/I$. It
is minimal since $\varphi_i(F_i)\subseteq \mfm\,F_{i-1}$, for every
$i=1,\ldots,4$. Therefore
\begin{eqnarray*}
4\leq \grade(I)=\min\{i\geq 0\mid \Ext^i_R(R/I,R)\neq 0\}\leq
\pd_R(R/I)\leq 4.
\end{eqnarray*}
Thus $I$ is a perfect ideal of grade $4$ and $R/I$ is Gorenstein (see,
e.g., \cite[Theorem~1.2.5, page 25]{bh} and
\cite[Proposition~3.2]{hunekeCM}).

Set $H:=(f_1,\ldots,f_7,f_9)\subset I$.  Since the aforementioned
resolution of $R/I$ is minimal, $f_8\not\in H$ and $H:f_8\subsetneq
R$. However, one can check that
$f_8^2=-xf_1f_7-xf_2f_5+xf_3f_4+xf_4^2+f_6f_9+f_7f_9$. Thus $f_8^2\in
HI$ and $HI:f_8^2=R$. Therefore, $H:f_8\subsetneq HI:f_8^2$ and $I$ is
not syzygetic (see \cite[Lemma~4.2]{planasCamb}).
\end{proof}

\begin{proposition}\label{dim5}
Let $(R,\mfm,k)$ be a regular local ring of dimension $5$. Let
$x,y,z,t,u$ be a regular system of parameters. Let
$I=(f_1,\ldots,f_9)$ be the ideal of $R$ defined as in the preceding
lemma. Then $I$ is a prime ideal of height 4, such that $R/I$ is
Gorenstein, and such that $I$ is not syzygetic.
\end{proposition}
\begin{proof}
By Lemma~\ref{perfect}, we only have to prove that $I$ is prime. Let
$(\widehat{R},\widehat{\mfm})$ be the completion of $(R,\mfm)$, which
is a five dimensional regular local ring with maximal ideal
$\widehat{\mfm}=\mfm\widehat{R}=(x,y,z,t,u)\widehat{R}$ generated by
the regular system of parameters $x,y,z,t,u$. By Lemma~\ref{perfect}
again, $I\widehat{R}=(f_1,\ldots,f_9)\widehat{R}$ is a perfect ideal
of height 4. If we prove that $I\widehat{R}$ is prime, since
$\widehat{R}$ is faithfully flat, then $I=I\widehat{R}\cap R$ and $I$
is prime as well. Therefore we can suppose that $R$ is complete.

Since $I$ is perfect of height 4, then $I$ is height unmixed and so
$\mfm$ is not an associated prime to $I$. Let $\mfp$ be any associated
prime to $I$ and set $D=R/\mfp$. Thus $D$ is a one dimensional
complete Noetherian local domain. Let $V$ be the integral closure of
$D$ in its quotient field $K$. Then $V$ is a finitely generated
$D$-module and a DVR (see \cite[Theorem~4.3.4]{sh}). Let $\nu$ be the
valuation on $K$ corresponding to $V$. Set
$(\nu_x,\nu_y,\nu_z,\nu_t,\nu_u)=
(\nu(x),\nu(y),\nu(z),\nu(t),\nu(u))$. In $V$, $f_i=0$, for
$i=1,\ldots,9$.  Applying $\nu$ to these equalities, one gets
$(\nu_x,\nu_y,\nu_z,\nu_t,\nu_u)=(6n,7n,8n,9n,10n)$, for some integer
$n\geq 1$ (see, e.g, \cite[Proof of Proposition~2.6]{gop}). In
particular, $\nu_x\geq 6$.

Let $(S,\mfn,k)$ be the regular local ring with $S=R/xR$ and
$\mfn=\mfm/xR=(y,z,t,u)$, by abuse of notation. One has
$xR+I=(x,y^2,yz,yt,yu-zt,z^2,zu,t^2,tu,u^2)$ and $R/(xR+I)\cong S/J$,
where $J$ is the ideal of $S$ defined as
$J=(y^2,yz,yt,yu-zt,z^2,zu,t^2,tu,u^2)$. Since $xR=\Ann_R(S)$, then
$\length_R(R/(xR+I))=\length_S(S/J)$. Note that $\mfn^3\subset
J\subset \mfn^2$. Consider the following two exact sequences of
$S$-modules:
\begin{eqnarray*}
0\to\mfn/J\to S/J\to S/\mfn\to 0\espai\mbox{ and }\espai
0\to \mfn^2/J\to \mfn/J\to \mfn/\mfn^2\to 0.
\end{eqnarray*}
On taking lengths,
\begin{eqnarray*}
\length_R(R/(xR+I))=\length_S(S/J)=\length_S(S/\mfn)+
\length_S(\mfn/\mfn^2)+\length_S(\mfn^2/J)=6.
\end{eqnarray*}  
Since $xR+I\subseteq xR+\mfp$ and
$R/(xR+\mfp)\cong (R/\mfp)/(x\cdot R/\mfp)=D/xD$, then
\begin{eqnarray*}
6=\length_R(R/(xR+I))\geq\length_R(R/(xR+\mfp))=\length_D(D/xD).
\end{eqnarray*}
Since $f_1=0$, $f_3+f_4=0$, $f_6+f_7=0$ and $f_9=0$ in $D$, then
$y^2,z^2,t^2,u^2\in xD$, and so $xD$ is a parameter ideal of the one
dimensional Cohen-Macaulay local domain $(D,\mfm/\mfp,k)$. Since $V$
is a finitely generated Cohen-Macaulay $D$-module of $\rank_D(V)=1$,
then $\length_D(D/xD)=\length_D(V/xV)$ (see \cite[Corollary~4.6.11,
  (c)]{bh}). Moreover
$\length_D(V/xV)=[k_V:k]\cdot\length_V(V/xV)=[k_V:k]\cdot\nu(x)$
Therefore, $\length_D(D/xD)=[k_V:k]\cdot\nu_x$. Recapitulating,
\begin{eqnarray*}
6=\length_R(R/(xR+I))\geq\length_R(R/(xR+\mfp))=[k_V:k]\cdot\nu_x\geq
6.
\end{eqnarray*}
Hence $\length_R(R/(xR+I))=\length_R(R/(xR+\mfp))$. By the additivity
of the length with respect to short exact sequences, $xR+I=xR+\mfp$.

Note that $x\not\in\mfp$, otherwise $\mfp\supset xR+I\supset
(x,y^2,z^2,t^2,u^2)$ and $\mfp=\mfm$, a contradiction. Then $\mfp\cap
xR=x\mfp$. In particular, on tensoring $0\to\mfp/I\to R/I\to R/\mfp\to
0$ by $R/xR$, one obtains the exact sequence $0\to L/xL\to R/(xR+I)\to
R/(xR+\mfp)\to 0$, where $L=\mfp/I$. Since $xR+I=xR+\mfp$, then
$L=xL$. By Nakayama's Lemma, $L=0$ and $I=\mfp$. Therefore, $I$ is a
prime ideal.
\end{proof}

We extend Proposition~\ref{dim5} to higher dimension, just by adding
to the ideal $(f_1,\ldots,f_9)$ the parameters of $R$ not involved in
the definition of the $f_i$.

\begin{corollary}\label{dimn}
Let $(R,\mfm,k)$ be a regular local ring of dimension $n\geq 6$. Let
$x_1,\ldots,x_n$ be a regular system of parameters. Let $I$ be the
ideal of $R$ generated by $f_1,\ldots,f_9,x_6,\ldots,x_n$, where
\begin{eqnarray*}
  &&f_1=x_2^2-x_1x_3\mbox{ , }f_2=x_2x_3-x_1x_4\mbox{ , }
  f_3=x_3^2-x_2x_4\mbox{,}\\ && f_4=x_2x_4-x_1x_5\mbox{ , }
  f_5=x_3x_4-x_2x_5\mbox{ , } f_6=x_4^2-x_3x_5\mbox{,}\\
  && f_7=x_3x_5-x_1^3\mbox{ , }f_8=x_4x_5-x_1^2x_2\mbox{ , }f_9=x_5^2-x_1^2x_3.
\end{eqnarray*}
Then $I$ is a prime ideal of height $n-1$, such that $R/I$ is
Gorenstein, and such that $I$ is not syzygetic.
\end{corollary}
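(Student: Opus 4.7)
The plan is to reduce everything to Proposition~\ref{dim5} by quotienting by the regular sequence $x_6,\ldots,x_n$. Set $R'=R/(x_6,\ldots,x_n)R$; this is a regular local ring of dimension $5$ with regular system of parameters (the images of) $x_1,\ldots,x_5$. Since $(x_6,\ldots,x_n)\subseteq I$ and the $f_i$ only involve $x_1,\ldots,x_5$, there is a canonical isomorphism $R/I\cong R'/I'$, where $I'=(f_1,\ldots,f_9)R'$. By Proposition~\ref{dim5} applied to $R'$, the quotient $R'/I'$ is a one-dimensional Gorenstein domain, hence so is $R/I$. Therefore $I$ is prime, $R/I$ is Gorenstein, and $\height(I)=\dim(R)-\dim(R/I)=n-1$.

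It remains to show that $I$ is not syzygetic; I would mimic the argument of Proposition~\ref{dim5}. Set $H=(f_1,\ldots,f_7,f_9,x_6,\ldots,x_n)\subseteq I$ and consider $f_8\in I$. The polynomial identity
\begin{eqnarray*}
 f_8^2 = -x_1f_1f_7 - x_1f_2f_5 + x_1f_3f_4 + x_1f_4^2 + f_6f_9 + f_7f_9
\end{eqnarray*}
used in Proposition~\ref{dim5} (it only involves the variables $x_1,\ldots,x_5$) remains valid, and each summand on the right-hand side is a product of an element of $H$ with an element of $I$. Hence $f_8^2\in HI$ and $HI:f_8^2=R$. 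On the other hand, if $f_8\in H$, then reducing modulo $(x_6,\ldots,x_n)$ would give $f_8\in(f_1,\ldots,f_7,f_9)R'$, contradicting the minimality of the generating set $\{f_1,\ldots,f_9\}$ of $I'$ witnessed by the explicit minimal free resolution built in the proof of Proposition~\ref{dim5}. Thus $f_8\notin H$ and $H:f_8\subsetneq R$. Combining both, $H:f_8\subsetneq HI:f_8^2$, so \cite[Lemma~4.2]{planasCamb} yields that $I$ is not syzygetic.

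The only delicate point is the non-containment $f_8\notin H$; this is the step that could in principle fail if the $f_i$ interacted with $x_6,\ldots,x_n$, but since the $f_i$ involve only $x_1,\ldots,x_5$, the reduction modulo $(x_6,\ldots,x_n)$ is transparent, and everything follows smoothly from the dimension five case already proved.
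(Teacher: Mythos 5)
Your proposal is correct and follows essentially the same route as the paper: reduce modulo the regular sequence $x_6,\ldots,x_n$ to invoke Proposition~\ref{dim5} for primeness, the Gorenstein property and the height, and then rerun the colon-ideal argument with $H=(f_1,\ldots,f_7,f_9,x_6,\ldots,x_n)$, the identity for $f_8^2$, and the minimality of the generators of $\bar{I}$ to conclude $H:f_8\subsetneq HI:f_8^2$, so $I$ is not syzygetic by \cite[Lemma~4.2]{planasCamb}. No substantive differences from the paper's own proof.
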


\begin{proof} Set $I=(f_1,\ldots,f_9,x_6,\ldots,x_n)$ and
let $J=(x_6,\ldots,x_n)$ be the ideal generated by
$x_6,\ldots,x_n$. Set $\bar{R}=R/J$ and let $\bar{g}$ stand for the
class modulo $J$ of an element $g$ of $R$. Set $\bar{I}=I/J=
(\bar{f}_1,\ldots,\bar{f}_9)$. Note that $\bar{x}_1,\ldots,\bar{x}_5$
is a regular system of parameters of the regular local ring $\bar{R}$.
By Proposition~\ref{dim5}, $\bar{I}$ is a prime ideal of height 4,
such that $\bar{R}/\bar{I}$ is Gorenstein, and such that $\bar{I}$ is
not syzygetic. In particular, $I$ is a prime ideal of $R$ such that
$R/I\cong\bar{R}/\bar{I}$ is Gorenstein. Moreover,
\begin{eqnarray*}
  \height(I)=\dim(R)-\dim(R/I)=n-\dim(\bar{R}/\bar{I})=
  n-(\dim(\bar{R})-\height(\bar{I}))=n-1.
\end{eqnarray*}
Suppose that $f_8=\sum_{i=1,i\neq 8}^9a_if_i+\sum_{i=6}^nb_ix_i$, for
some $a_i,b_j\in R$. Then, on taking classes modulo $J$, $\bar{f}_8\in
(\bar{f}_1,\ldots,\bar{f}_7,\bar{f}_9)$, which is a contradiction with
the proof of Lemma~\ref{perfect}, where one shows that $\bar{I}$ is
minimally generated by $\bar{f}_1,\ldots,\bar{f}_9$. Hence $f_8\not\in
(f_1,\ldots,f_7,f_9,x_6,\ldots,x_9)=:H$ and $H:f_8\subsetneq R$. The
same equality at the end of the proof of Lemma~\ref{perfect}
shows that $f_8^2\in HI$, thus $H:f_8\subsetneq HI:f_8^2=R$. It
follows that $I$ is not syzygetic.
\end{proof}

\begin{proof}[Proof of Theorem $(ii)\Rightarrow (i)$.]
By hypothesis $(ii)$, the maximal ideal $\mfm$ of $R$ is
syzygetic. Hence $R$ is regular local (see the proof of
\cite[Corollary~3.8]{hsv}). By hypothesis $(ii)$ and using
Proposition~\ref{dim5} and Corollary~\ref{dimn}, one deduces that
$\dim(R)\leq 4$.
\end{proof}

\section{Final remarks}

From the Theorem and the isomorphism
$H_2(R,R/I,R/I)\cong\ker(\alpha_2)$ (\cite[Corollaire~15.10]{andre} or
\cite[Corollaire~3.2]{planasMan}), we deduce a characterization of
Noetherian local rings which are regular of dimension at most 4 in
terms of the Andr\'e-Quillen homology.

\begin{corollary}\label{corollaryAQ}
Let $(R,\mfm)$ be a Noetherian local ring. The following conditions
are equivalent:
\begin{itemize}
\item[$(a)$] $R$ is regular and $\dim(R)\leq 4$;
\item[$(b)$] $H_2(R,S,S)=0$ for every Gorenstein quotient domain $S$
  of $R$.
\end{itemize}
\end{corollary}

\begin{remark}
The ``global'' argument used in \cite{planasJPAA} does not seem to
work here. Indeed, let $R$ be a Noetherian ring, not necessarily
local. For the sake of easiness, let $\Spec(R)$ denote the set of
prime ideals of $R$, $\Gor(R)$ be the set of ideals $I$ of $R$ such
that $R/I$ is Gorenstein and, finally, $\Syz(R)$ be the set of
syzygetic ideals of $R$. Part of \cite[Theorem~(C)]{planasJPAA}
states: ``If $\Spec(R)\subset\Syz(R)$, then $R$ has global dimension
at most $3$''.  The proof has two steps. First, it exhibits a non
syzygetic height 3 prime ideal in any regular local ring of
dimension 4. Subsequently, it supposes that $\dim(R)\geq 4$, then it
localizes at a prime ideal $\mfq$ of height 4, obtaining the four
dimensional regular local ring $R_{\mathfrak{q}}$. Using the first
step, it deduces the existence of a prime ideal $\mfp
R_{\mathfrak{q}}\in\Spec(R_{\mathfrak{q}})$, which is not
syzygetic. Therefore $\mfp$ is prime, but not syzygetic. Thus
$\Spec(R)\not\subset\Syz(R)$, which finishes the proof. The analogous
result in dimension four would be: ``If $\Spec(R)\cap
\Gor(R)\subset\Syz(R)$, then $R$ has global dimension at most
$4$''. As a first step, we have shown Proposition~\ref{dim5} (and even
Corollary~\ref{dimn}). Trying to proceed as before, suppose that
$\dim(R)\geq 5$ and localize at a prime ideal $\mfq$ of height 5,
obtaining the five dimensional regular local ring
$R_{\mathfrak{q}}$. Using Proposition~\ref{dim5}, one deduces the
existence of a prime ideal $\mfp
R_{\mathfrak{q}}\in\Spec(R_{\mathfrak{q}})$, such that
$R_{\mathfrak{q}}/\mfp R_{\mathfrak{q}}\cong (R/\mfp)_{\mathfrak{q}}$
is Gorenstein, and such that $\mfp R_{\mathfrak{q}}$ is not
syzygetic. It follows that $\mfp$ is prime, but not syzygetic.
However, we can not assure that $R/\mfp$ is Gorenstein, in other
words, $\mfp$ is not necessarily in $\Spec(R)\cap \Gor(R)$, and so we
are not able to deduce $\Spec(R)\cap \Gor(R)\not\subset\Syz(R)$.
\end{remark}

With the preceding notations and using \cite[Theorem~(C)]{planasJPAA},
one concludes:

\begin{corollary}
Let $(R,\mfm)$ be a Noetherian local ring. Then $R$ is regular of
dimension $4$ if, and only if, $\Spec(R)\cap \Gor(R)\subset\Syz(R)$
and $\Spec(R)\not\subset\Syz(R)$.
\end{corollary}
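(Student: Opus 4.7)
The plan is to combine Theorem~\ref{main} with \cite[Theorem~(C)]{planasJPAA}. Specialized to the local ring $R$, the latter gives the equivalence: $\Spec(R)\subset\Syz(R)$ if and only if $R$ is regular with $\dim(R)\leq 3$. One direction of this equivalence is the statement quoted in the preceding remark; the converse is part of the same theorem. With this in hand, the corollary becomes a direct bookkeeping combination of the two characterizations.

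For the forward implication, suppose $R$ is regular local of dimension $4$. Theorem~\ref{main}, $(i)\Rightarrow (ii)$, immediately gives $\Spec(R)\cap\Gor(R)\subset\Syz(R)$. Since $\dim(R)=4>3$, the contrapositive of \cite[Theorem~(C)]{planasJPAA} produces a prime ideal of $R$ that is not syzygetic, so $\Spec(R)\not\subset\Syz(R)$.

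For the converse, suppose both $\Spec(R)\cap\Gor(R)\subset\Syz(R)$ and $\Spec(R)\not\subset\Syz(R)$. Theorem~\ref{main}, $(ii)\Rightarrow (i)$, yields that $R$ is regular with $\dim(R)\leq 4$. If $\dim(R)\leq 3$ were the case, then \cite[Theorem~(C)]{planasJPAA} would force $\Spec(R)\subset\Syz(R)$, contradicting the second hypothesis. Hence $\dim(R)=4$.

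No step poses a real obstacle: the nontrivial inputs (the construction of a non-syzygetic height-$3$ prime in any four-dimensional regular local ring, on which \cite[Theorem~(C)]{planasJPAA} relies, and the characterization via Gorenstein primes established in Theorem~\ref{main}) are already available, so the main concern is simply the careful invocation of both directions of \cite[Theorem~(C)]{planasJPAA} in its local form.
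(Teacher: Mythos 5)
Your proof is correct and coincides with the paper's intended argument: the paper proves this corollary simply by combining Theorem~\ref{main} with both directions of \cite[Theorem~(C)]{planasJPAA}, exactly as you do (the forward direction via $(i)\Rightarrow(ii)$ plus the contrapositive of Theorem~(C), the converse via $(ii)\Rightarrow(i)$ plus Theorem~(C) to exclude $\dim(R)\leq 3$). No gaps.
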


\section*{Acknowledgement}
It is a pleasure to thank the comments of the referee. We are very
grateful to Singular, which is of inestimable help in guessing
examples and checking heavy computations.

{\small

}

\vspace*{0.2cm}

{\footnotesize\sc

\noindent Departament de Matem\`atiques, Universitat Polit\`ecnica de
Catalunya. \newline Diagonal 647, ETSEIB, E-08028 Barcelona, Catalunya.
Email: {\em francesc.planas@upc.edu} }
\end{document}